\newcommand{\R}{\mathbb{R}}
\newcommand{\Z}{\mathbb{Z}}
\newcommand{\id}{\mathop{\mathrm{Id}}\nolimits}
\newcommand{\co}{\colon\thinspace}
\newcommand{\conn}{\mathbin{\sharp}}
\newcommand{\ev}[1]{#1_\propto}
\renewcommand{\tilde}{\widetilde}
\theoremstyle{plain}
\newtheorem{theorem}{Theorem}[section]
\newtheorem{corollary}[theorem]{Corollary}
\newtheorem{proposition}[theorem]{Proposition}
\newtheorem{lemma}[theorem]{Lemma}
\theoremstyle{definition}
\newtheorem{definition}[theorem]{Definition}
\theoremstyle{remark}
\newtheorem{remark}[theorem]{Remark}
\title{Regular-equivalence of $2$-knot diagrams and sphere eversions}
\author{Masamichi Takase}
\thanks{The first-named author has been
supported in part by Grant-in-Aid for Scientific Research (C),
(No.~22540074), Japan Society for the Promotion of Science.}
\address{Faculty of Science and Technology,
Seikei University, 3-3-1 Kichijoji-kitamachi, Musashino,
Tokyo 180-8633, Japan}
\email{mtakase@st.seikei.ac.jp}
\author{Kokoro Tanaka}
\thanks{The second-named author 
thank J. Scott Carter for helpful comments on the draft. 
He has been supported in part by Grant-in-Aid for Scientific Research (C),
(No.~26400082), Japan Society for the Promotion of Science.}
\address{Department of Mathematics, Tokyo Gakugei University, 
4-1-1 Nukuikita-machi, Koganei, Tokyo 184-8501, Japan}
\email{kotanaka@u-gakugei.ac.jp}
\dedicatory{Dedicated to Professor Yukio Matsumoto 
on the occasion of his seventieth birthday}
\subjclass[2000]{Primary 
57Q45, 
Secondary 
57R40, 
57R42. 
}
\keywords{$2$-knot, sphere eversion, diagram, branch point, 
Roseman move, immersion, quadruple point}
\date{\today}
\begin{document}\sloppy

\begin{abstract}
For each diagram $D$ of a $2$-knot, 
we provide a way to construct a new diagram $D'$ of the same knot 
such that any sequence of Roseman moves between $D$ and $D'$ 
necessarily involves branch points. 
The proof is done by developing the observation that no sphere eversion 
can be lifted to an isotopy in $4$-space. 
\end{abstract}

\maketitle 

\section{Introduction}\label{intro}
A \textit{surface-knot} (or a \textit{$\Sigma^2$-knot}) is 
a submanifold of $4$-space $\R^4$, 
homeomorphic to a closed connected oriented surface $\Sigma^2$. 
Two surface-knots are said to be \textit{equivalent} if they 
can be deformed to each other through an isotopy of $\R^4$. 
In this paper we mainly study $S^2$-knots, also called \textit{$2$-knot}s, 
and especially their diagrams. 
A \textit{diagram} of a surface-knot $F$ is the image of $F$ 
via a generic projection $\R^4 \to \R^3$, 
that is equipped with a height information to distinguish the $4$th coordinate. 
A diagram is composed of four kinds of local pictures 
shown in Figure~\ref{fig:diagram}, 
each of which depicts a neighborhood of a typical point --- 
a regular point, a \textit{double point}, 
an isolated \textit{triple point} or an isolated \textit{branch point}. 
Two surface-knot diagrams are said to be \textit{equivalent} 
if they are related by (ambient isotopies of $\R^3$ and) 
a finite sequence of seven Roseman moves, 
shown in Figure~\ref{fig:roseman}, where we omit height information for simplicity. 
According to Roseman \cite{R}, two surface-knots are equivalent if and only if 
they have equivalent diagrams. 

Since an oriented surface-knot has a trivial normal bundle, 
(after a suitable isotopy of $\R^4$) 
it has a diagram with no branch points (\cite{CS, Hirsch}). 
In \cite{Satoh}, 
such a ``branch-free'' diagram is called a 
\textit{regular} diagram, and 
two regular diagrams are said to be 
\textit{regular-equivalent} if they are 
related by a finite sequence of 
``branch-free'' Roseman moves, that is, 
the moves of type $D1$, $D2$, $T1$ and $T2$ in Figure~\ref{fig:roseman}. 
Recently, many studies of surface-knot diagrams up to 
suitably restricted local deformations 
have been made (\cite{Jablonowski, KOT, OT, Satoh}), 
since they are important in studying 
the (in)dependence among the seven Roseman moves (see e.\,g.\,\cite{Kawamura}). 
\begin{figure}[thbp]
\begin{center}
\setlength\unitlength{.85\textwidth}
\begin{picture}(1,0.27)%
\put(0,0.05){\includegraphics[width=\unitlength]{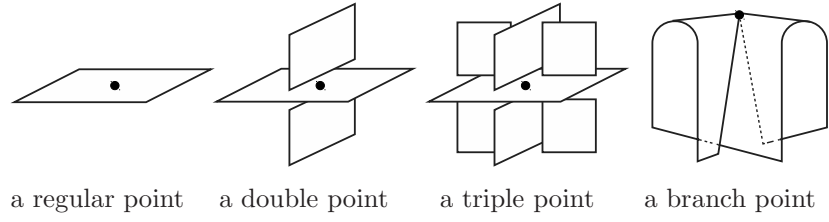}}
\put(0.0,0){\mbox{a regular point}}%
\put(0.255,0){\mbox{a double point}}%
\put(0.525,0){\mbox{a triple point}}%
\put(0.775,0){\mbox{a branch point}}%
\end{picture}%
\end{center}
\caption{Local pictures of the projection of a surface-knot}\label{fig:diagram}
\end{figure}

\begin{figure}[thbp]
\includegraphics[width=\textwidth]{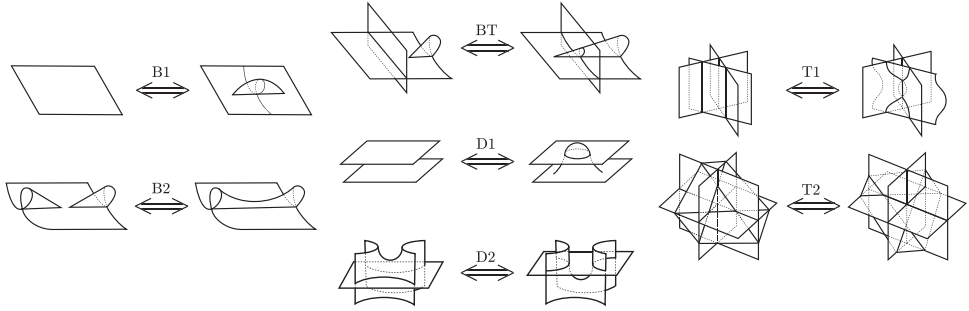}
\caption{The Roseman moves}\label{fig:roseman}
\end{figure}

In \cite{Satoh}, Satoh explicitly described a pair of regular diagrams 
of a $T^2$-knot which are mutually equivalent but not regular-equivalent, 
by computing certain elements of 
$H_1(T^2;\Z)$ derived from the double point curves of regular diagrams. 
Note that his method turns out not to be effective 
for the case of $S^2$-knots, since $H_1(S^2;\Z)=0$. 
Recently, Oshiro and the second author 
proved Satoh's result by a different method
using certain algebraic systems, called \textit{rack}s (\cite[Section 6]{OT}). 
Note that it also turns out not to be effective for the case of $S^2$-knots 
(see \cite[Section 6]{OT}). 

In this paper, we present the first example with the same property 
in the case of oriented $S^2$-knots. 
Precisely, we provide a way to construct, 
given a regular diagram $D$ of an \textit{oriented} $S^2$-knot, 
a new regular diagram $D'$ such that $D$ and $D'$ are mutually 
equivalent but not regular-equivalent (Theorem~\ref{thm:main}). 
Our method stands on basic material in immersion theory 
--- 
especially on the number of quadruple points
of a generic sphere eversion and that of generic 
immersions of $3$-manifolds in $4$-space, 
and is potentially useful for surface-knots other than $S^2$-knots. 
In the course of the argument, we observe that no sphere eversion 
can be lifted to an isotopy in $\R^4$ (Corollary~\ref{cor:eversion}). 

Throughout the paper, we work in the smooth category; all manifolds, 
immersions and embeddings 
are supposed to be differentiable of class $C^\infty$, 
unless otherwise stated. 
We always deal with 
\textit{generic immersions} and \textit{generic regular homotopies} 
defined in a natural way (see \cite{MB}).   
We also suppose that the spheres are oriented. 

We fix the following symbols. 
\begin{itemize}
\item
The orthogonal projection 
\[\begin{array}{rccc}
\pi\co&\R^{n+1}&\longrightarrow&\R^n,\\
&(x_1,x_2,\ldots,x_{n+1})&\longmapsto&(x_1,x_2,\ldots,x_n), 
\end{array}\]
defined by dropping off the last coordinate. 

\item
The reflection
\[\begin{array}{rccc}
r\co&\R^n&\longrightarrow&\R^n,\\
&(x_1,x_2,\ldots,x_n)&\longmapsto&(-x_1,x_2,\ldots,x_n), 
\end{array}\]
obtained by multiplying the first coordinate by $-1$. 

\item
The standard inclusion $j\co S^2 \hookrightarrow \R^3$. 
\end{itemize}


\section{Immersion theory}
Smale's paradox states that 
one can turn a sphere inside out. 
More precisely, 
Smale proved that the standard inclusion 
$j\co S^2\hookrightarrow\R^3$ and 
its reflection $j^*:=r\circ j$ are regularly homotopic. 
Such a regular homotopy is called \textit{an eversion}. 
Several examples of explicit eversions 
have been discovered and described 
by many mathematicians, including 
Arnold Shapiro and Bernard Morin \cite{FM}
(see \cite[\S2]{Aitchison} and \cite{Carter} for detailed histories). 

The following nature of a sphere eversion is 
well-known and plays a key role in this paper. 

\begin{proposition}[Max and Banchoff \cite{MB}, see also \cite{Hughes}]\label{eversion}
Every sphere eversion has an odd number of 
quadruple points. 
\end{proposition}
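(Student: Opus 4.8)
The plan is to reinterpret the whole eversion as a single generic immersion of a $3$-manifold in $4$-space and then to detect its quadruple points by a regular-homotopy invariant. Write the eversion as a generic regular homotopy $f_t\co S^2\to\R^3$, $t\in[0,1]$, with $f_0=j$ and $f_1=j^*$, and form its trace
\[
\hat H\co S^2\times[0,1]\longrightarrow \R^3\times[0,1]\subset\R^4,\qquad \hat H(x,t)=(f_t(x),t).
\]
This is a generic immersion of the $3$-manifold $S^2\times[0,1]$, and by construction a point of $\hat H$ is a quadruple point exactly when four sheets of some $f_t$ meet; thus the quadruple points of $\hat H$ are in bijection with those of the eversion, and I would first reduce the statement to counting quadruple points of $\hat H$.

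Next I would close $\hat H$ up into an immersion of a \emph{closed} $3$-manifold. Since $f_0=j$ and $f_1=j^*$ are embeddings whose common image is the round sphere bounding a ball, one may glue a standard embedded $3$-ball to $S^2\times\{0\}$ pushed into the region $\{t<0\}$, and another to $S^2\times\{1\}$ pushed into $\{t>1\}$. These caps are embedded and meet the trace only along the two end spheres, so they introduce no new multiple points; after rounding corners one obtains a generic immersion $G\co S^3\to\R^4$ with exactly the same number of quadruple points as the eversion. I would then invoke the fact from immersion theory that the number of quadruple points, reduced modulo $2$, is a regular-homotopy invariant of generic immersions of closed $3$-manifolds in $\R^4$. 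It therefore suffices to determine this single parity for $G$.

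The remaining, and hardest, step is to show that $G$ lies in the \emph{odd} class. This parity cannot be read off from characteristic numbers of $S^3$, all of which vanish, so it must be extracted from the genuinely secondary Smale-type invariant of $G$. The mechanism that produces an odd value is the orientation reversal built into the eversion: the co-orientation of the trace $\hat H$ is governed by the surface normal of $f_t$, which at the $t=0$ end agrees with the outward normal of the capping ball, whereas at the $t=1$ end, because $j^*=r\circ j$ carries the reversed normal, it matches the inward normal of the other cap. This incompatibility is precisely the obstruction to $G$ being regularly homotopic to the standard (quadruple-point-free) embedding of $S^3$, and I expect it to represent the generator of the relevant obstruction group, whose image under the mod-$2$ quadruple-point count is nontrivial. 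Making this identification precise — pinning down the normal form at a quadruple point and matching the co-orientation defect to an odd Smale invariant — is the main obstacle; as a cross-check one could instead evaluate the now well-defined parity on an explicit eversion such as Morin's. Either route yields that the number of quadruple points is odd.
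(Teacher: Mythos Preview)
The paper does not prove this proposition at all; it is quoted as a known result, with attribution to Max--Banchoff \cite{MB} and Hughes \cite{Hughes}. So there is no in-paper proof to compare against, and the relevant question is whether your outline stands on its own.

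Your framework is exactly that of Hughes: pass from the eversion to its trace, cap off to obtain a generic immersion $G\co S^3\to\R^4$, and use that the mod-$2$ quadruple-point number is a regular-homotopy (indeed cobordism) invariant of such immersions. Up to this point the argument is sound, and the reduction to determining a single parity is correct.

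The genuine gap is the step you yourself flag as ``the main obstacle'': showing that $G$ lies in the odd class. Your heuristic --- that the normal of $f_t$ matches the outward normal of one cap and the inward normal of the other --- is suggestive, but as written it does not compute anything. Both caps are embedded balls bounding the same round sphere, so the resulting $G$ is perfectly well defined as an immersion; the co-orientation mismatch you describe does not by itself identify the Smale invariant of $G$ in $\pi_3(SO(4))$ or its image in $\pi_3^S\cong\Z/24\Z$. What Hughes actually does is carry out that identification, and it is not a formality: one has to track how the twist in the normal bundle along the trace contributes to the Smale invariant and then check that the resulting class maps nontrivially under the quadruple-point homomorphism $\pi_3^S\to\Z/2\Z$. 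Your write-up stops short of this computation.

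Your proposed ``cross-check'' --- evaluate the parity on one explicit eversion --- is logically valid once the invariance is established, but note that it is essentially a different proof: invariance of $Q$ (which is Nowik's theorem, not something your $S^3$ construction supplies) plus an explicit odd count (which is the Max--Banchoff analysis). If you go that route, the $S^3$ closure is no longer doing any work. So as it stands your proposal is a correct outline of Hughes's argument with the decisive computation omitted, together with a fallback that amounts to citing the other reference.
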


Proposition~\ref{eversion} states that 
in any regular homotopy 
between $j$ and $j^*$ an odd number of 
quadruple points appear. 
Nowik generalized this result for immersions 
of orientable surfaces as follows. 

Let $f,f'\co\Sigma^2\to\R^3$ be 
two regularly homotopic generic immersions. 
Then 
Nowik \cite{Nowik} proved that 
the number modulo $2$ of 
quadruple points occurring in generic 
a regular homotopy between $f$ and $f'$ 
does not depend on the choice of the 
regular homotopy; we denote this invariant 
by $Q(f,f')\in\Z/2\Z$. 
%

\begin{theorem}[Nowik \cite{Nowik2}]\label{nowik2}
Let $f\co\Sigma^2_g\to\R^3$ be a generic immersion 
of a closed orientable surface $\Sigma^2_g$ of genus $g$. Then, 
for an orientation-preserving diffeomorphism $\varphi$ 
\[
Q(f,f\circ\varphi)=\mathrm{rank}(\varphi_*-\id)\pmod2, 
\]
and for an orientation-reversing diffeomorphism $\varphi$
\[
Q(f,f\circ\varphi)=\mathrm{rank}(\varphi_*-\id)+g+1\pmod2, 
\]
where $\varphi_*$ is the homomorphism on $H_1(\Sigma^2_g;\Z/2\Z)$
induced by $\varphi$.

In particular when $\Sigma^2_g=S^2$, 
$Q(f,f\circ\varphi)$ equals $0$ or $1\pmod2$
depending on whether $\varphi$ is orientation-preserving
or orientation-reversing. 
\end{theorem}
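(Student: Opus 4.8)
The plan is to build everything on two elementary properties of the mod-$2$ quadruple-point count $Q$. First, concatenation of generic regular homotopies gives the cocycle relation
\[
Q(f,h)=Q(f,g)+Q(g,h)\pmod2
\]
for any three mutually regularly homotopic generic immersions, together with $Q(f,f)=0$. Second, post-composing a regular homotopy by a fixed diffeomorphism $\psi$ of the source leaves the image, hence the set of quadruple points, unchanged, so that $Q(g\circ\psi,h\circ\psi)=Q(g,h)$. Combining these, I would show that $\Phi_f(\varphi):=Q(f,f\circ\varphi)$ satisfies $\Phi_f(\varphi\circ\psi)=\Phi_f(\varphi)+\Phi_f(\psi)\pmod2$ on the set of $\varphi$ with $f\circ\varphi$ regularly homotopic to $f$; that is, $\Phi_f$ is a homomorphism into $\Z/2\Z$. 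Moreover an ambient isotopy $\varphi_t$ keeps the image $f(\Sigma^2_g)$ constant, so $f\circ\varphi_t$ carries no quadruple points and $\Phi_f$ descends to the mapping class group.

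Next I would identify the domain and the target. By the Hirsch--Smale immersion theory together with Pinkall's classification of immersed surfaces in $\R^3$, the regular homotopy class of $f$ is recorded by a quadratic refinement $q_f$ of the mod-$2$ intersection form on $H_1(\Sigma^2_g;\Z/2\Z)$, and $f\circ\varphi$ is regularly homotopic to $f$ exactly when $\varphi_*$ lies in the orthogonal group $O(q_f)\subset\mathrm{Sp}(2g,\Z/2\Z)$. On such a group the Dickson invariant $A\mapsto\mathrm{rank}(A-\id)\pmod2$ is a homomorphism, and (for the relevant $g$) the essentially unique nontrivial one. A Dehn twist $\tau_c$ acts on homology by the transvection $x\mapsto x+(x\cdot c)\,c$, a direct computation shows $\tau_{c*}$ preserves $q_f$ precisely when $q_f(c)=1$, and such orthogonal transvections generate $O(q_f)$. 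Since both $\Phi_f$ and $\mathrm{rank}(\varphi_*-\id)$ are homomorphisms on the stabilizer, it therefore suffices to match them on these generating twists, where the Dickson value is $\mathrm{rank}(\tau_{c*}-\id)=1$ because $c\neq0$.

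The hard part will be the two local computations that anchor the argument: the explicit count, modulo $2$, of quadruple points in a regular homotopy from $f$ to $f\circ\tau_c$ supported near $c$, proving $\Phi_f(\tau_c)=1$, and the analogous count for a fixed reflection handling the orientation-reversing classes. These are genuine immersion-theoretic calculations in a standard local model, and everything above merely reduces the theorem to them. Once the value on an admissible twist is pinned to $1$, nontriviality of $\Phi_f$ together with uniqueness of the Dickson homomorphism forces $\Phi_f=\mathrm{rank}(\varphi_*-\id)$ on orientation-preserving classes, while factoring an orientation-reversing $\varphi$ through the reflection and computing the resulting correction term is where the genus-dependent summand $g+1$ must appear.

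Finally, for the case $\Sigma^2_g=S^2$ needed in this paper, all the homological data vanish and the argument collapses. Every orientation-preserving diffeomorphism is isotopic to the identity, giving $Q=0$; and for orientation-reversing $\varphi$ the cocycle relation around the loop $j\to f\to f\circ\varphi\to j\circ\varphi\to j$, combined with reparametrization invariance, yields
\[
Q(f,f\circ\varphi)=Q(j,j\circ\varphi)=Q(j,j^*),
\]
since $\varphi$ is isotopic to the reflection and $j\circ\varphi\simeq r\circ j=j^*$. By Proposition~\ref{eversion} this is odd, so the single quadruple point of a sphere eversion is exactly the source of the value $1$, and no separate local computation is required in this case.
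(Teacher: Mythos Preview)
The paper does not prove this theorem at all: it is quoted as a result of Nowik with a citation to \cite{Nowik2}, and no argument is given. So there is no ``paper's own proof'' to compare your proposal against.

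That said, a few comments on the substance. Your outline for general $\Sigma^2_g$ is a plausible strategy and is in the spirit of Nowik's actual approach (homomorphism on the mapping class group, reduction to generators, identification with the Dickson-type invariant), but you correctly flag that the core input---the mod-$2$ quadruple-point count for a regular homotopy realizing a single admissible Dehn twist, and the analogous reflection computation producing the $g+1$ term---is not carried out. Without that local calculation the general case remains a reduction, not a proof. There is also a small imprecision: your claim that $f$ and $f\circ\varphi$ are regularly homotopic exactly when $\varphi_*\in O(q_f)$ needs more care, since for an orientable $\Sigma^2_g$ one must track the Hirsch--Smale classification via maps to $SO(3)$ rather than just the quadratic form; this does not affect the $S^2$ case.

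For $\Sigma^2_g=S^2$, which is the only case the present paper actually uses, your argument is complete and correct. The orientation-preserving case is immediate from isotopy to the identity, and your cocycle manipulation
\[
Q(f,f\circ\varphi)=Q(f,j)+Q(j,j\circ\varphi)+Q(j\circ\varphi,f\circ\varphi)=Q(j,j\circ\varphi)
\]
together with the fact that any orientation-reversing $\varphi$ is isotopic to a reflection reduces the orientation-reversing case to $Q(j,j^*)$, which is odd by Proposition~\ref{eversion}. So for the purposes of this paper your sketch suffices, even though the full Nowik theorem would require the missing local computations.
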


\begin{remark}\label{rmk:conn}
For two generic immersions $f\co \Sigma\to \R^3$ and 
$f'\co \Sigma'\to \R^3$ of oriented surfaces, 
we can consider the \textit{oriented connected sum} of $f$ and $f'$ 
in the similar way to \cite[p.\,167]{Hughes2} 
(see also \cite[\S2]{Kervaire-60} and \cite[p.\,502]{Hughes}). 
Roughly, the connected sum is an immersion of 
$\Sigma\conn\Sigma'$ whose image is 
formed by connecting the images of $f$ and $f'$ 
by an untwisted tube in $\R^3$ with respect to the orientations. 
Note that the resultant immersion is subject to the choice of 
``the connecting tube'', but 
two immersions constructed in this way 
are mutually regularly homotopic through 
\textit{a regular homotopy with no quadruple points} since 
we can move the tube keeping it free from 
the (isolated) triple points of $f$ and $f'$. 
\end{remark}

\begin{figure}[thbp]
\begin{center}
\setlength\unitlength{.85\textwidth}
\begin{picture}(1,0.30)%
\put(0,0.06){\includegraphics[width=\unitlength]{connnn2.eps}}
\put(-0.01,0){\mbox{$f\co\Sigma\to\R^3$}}%
\put(0.23,0){\mbox{$j^*\co S^2\to\R^3$}}%
\put(0.50,0){\mbox{$f\co\Sigma\to\R^3$}}%
\put(0.75,0){\mbox{$\ev{j}\co S^2\to\R^3$}}%
\put(0.18,0.31){\mbox{$f \conn j^*$}}%
\put(0.69,0.31){\mbox{$f \conn \ev{j}$}}%
\end{picture}%
\end{center}
\caption{$f\conn j^*$ and $f\conn\ev{j}$}\label{fig:conn}
\end{figure}

Now, by Nowik's result in \cite{Nowik}, 
we can easily prove the following. 

\begin{lemma}\label{nowik}
For a generic immersion $f\co\Sigma\to\R^3$ 
of a closed orientable surface $\Sigma$, 
\[
Q(f,f\conn j^*)=1\pmod2. 
\]
Note that 
$f$ and $f\conn j^*$ are regularly homotopic 
by \cite[Theorem~1]{Kaiser}. 
\end{lemma}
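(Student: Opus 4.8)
The plan is to realize $f\conn j^*$ by an explicit regular homotopy starting from $f$, in which the only quadruple points are those forced by a sphere eversion, and then to invoke the well-definedness of $Q$ from \cite{Nowik}. The starting observation is that Proposition~\ref{eversion} can be read as $Q(j,j^*)=1\pmod2$: an eversion is precisely a regular homotopy between $j$ and $j^*=r\circ j$, and Max--Banchoff guarantee that it contains an odd number of quadruple points.

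Next I would build the regular homotopy from $f$ to $f\conn j^*$ in two stages. In the first stage, pick a regular point of $f$ and grow out of it a thin embedded tube ending in a small standardly embedded $2$-sphere, a copy of $j$; this realizes $f\conn j$ and, since the whole appended feature is embedded and disjoint from the rest of the image of $f$, it can be carried out without any quadruple points. In the second stage I would apply a fixed sphere eversion to this appended summand, deforming its image from $j$ to $j^*$ while keeping the connecting tube away from the (isolated) triple points of $f$; by the discussion in Remark~\ref{rmk:conn} this is always possible, so the resulting immersion is $f\conn j^*$ and the quadruple points occurring in this stage are exactly those of the eversion.

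Concatenating the two stages produces a generic regular homotopy from $f$ to $f\conn j^*$ whose total number of quadruple points is odd, coming entirely from the eversion in the second stage. Since $Q(f,f\conn j^*)$ does not depend on the chosen regular homotopy (Nowik \cite{Nowik}), this yields $Q(f,f\conn j^*)=1\pmod2$, as claimed.

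I expect the only real subtlety to lie in the bookkeeping of the second stage: one must be sure that performing the eversion on the summand, together with the accompanying motion of the connecting tube, creates no quadruple points beyond those of the eversion itself. This is exactly what Remark~\ref{rmk:conn} provides, since the tube can be kept free of the triple points of $f$ throughout the deformation; granting this, the remaining steps are routine.
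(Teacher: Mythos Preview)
Your argument and the paper's share the same skeleton: pass from $f=f\conn j$ to $f\conn j^*$ by applying a regular homotopy to the $S^2$ summand and count its quadruple points. The paper does not, however, run a full eversion on the summand. It first observes that the \emph{oriented} connected sum $f\conn j^*$ forces the connecting tube to pierce the sphere, and then reinterprets this picture as $f\conn\ev{j}$ for a specific immersed sphere $\ev{j}$ (the summand absorbs the crossing), with the tube now attached on the ``outside'' where nothing happens. It then quotes an explicit Max--Banchoff regular homotopy from $j$ to $\ev{j}$ with a single quadruple point; because this homotopy is supported away from the tube-attaching disk, the tube can be left \emph{fixed} throughout and no bookkeeping is needed.

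In your version the summand undergoes a full eversion, and your appeal to Remark~\ref{rmk:conn} does not quite settle the point you rightly flag. That remark is about moving a tube between two \emph{fixed} generic immersions; in your second stage one end of the tube must ride along with the evolving sphere, and you must keep it clear of the \emph{moving} triple-point locus of the eversion, not just of the fixed triple points of $f$, or extra quadruple points will appear. This can certainly be arranged---for instance by choosing an eversion in which a small disk stays embedded and disjoint from all triple points, and attaching the tube there---but it should be said. Once this is supplied, your argument is correct; the paper's localized homotopy $j\leadsto\ev{j}$ is exactly the device that makes that extra care unnecessary.
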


\begin{proof}
It is clear that 
in order to take the oriented connected sum $f\conn j^*$ 
we need to yield \textit{new} self-intersection points 
as in the left of Figure~\ref{fig:conn}, 
so that we can consider $f\conn j^*$ to be the connected sum 
$f\conn\ev{j}$ of $f$ and 
the immersion $\ev{j}\co S^2\to\R^3$ described 
in the right of Figure~\ref{fig:conn}. 

Since the immersion $\ev{j}$ is the very first (or last) step in 
an explicitly described eversion (see \cite[Fig.~2b, p.\,199]{MB}). 
Therefore, we have 
an explicit regular homotopy between the standard 
inclusion $j$ and  $\ev{j}$, and hence between 
$f=f\conn j$ and $f\conn\ev{j}$, 
with a single quadruple point (\cite[Fig.~14a, p.\,206]{MB}). 
Hence the result follows from Nowik's result \cite{Nowik}. 
\end{proof}
The following proposition, along with Lemma~\ref{nowik}, 
will play a crucial role in the proof of our main theorem 
(Theorem~\ref{thm:main}). 

\begin{proposition}\label{lift}
Let $f\co S^3\to\R^4$ be a generic immersion of the $3$-sphere. 
If there exists an embedding $\tilde{f}\co S^3\to\R^5$ 
such that $f=\pi\circ\tilde{f}$ for the orthogonal projection 
$\pi\co\R^5\to\R^4$, then 
$f$ has an even number of quadruple points. 
\end{proposition}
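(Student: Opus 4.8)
The plan is to use the lift to equip $f$ with a \emph{globally consistent height ordering} at all of its self-intersections, and then to count the quadruple points by a Morse-theoretic argument in the spirit of Max--Banchoff (Proposition~\ref{eversion}). Write $h\co S^3\to\R$ for the last coordinate of $\tilde f$, so that $h$ together with $f=\pi\circ\tilde f$ recovers $\tilde f$. Since $\tilde f$ is an embedding while $f=\pi\circ\tilde f$ is an immersion, the vertical direction $\ker\pi$ is nowhere tangent to $\tilde f(S^3)$; geometrically, a quadruple point of $f$ is exactly a \emph{vertical $4$-secant} of the embedded $3$-sphere $\tilde f(S^3)\subset\R^5$, and over each such point the four sheets are linearly ordered by $h$. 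First I would record the multiple-point manifolds $V_r=\{(p_1,\dots,p_r)\in (S^3)^r : p_i \text{ distinct},\ \tilde f(p_i)\text{ lie on one vertical line}\}$ for $r=2,3,4$. The crucial structural observation is that the absence of vertical tangencies forces two distinct sheets \emph{never} to merge, so each $V_r$ is a \emph{closed} manifold (of dimension $2,1,0$ respectively), and $h$ makes $V_4$ the set of quadruple points counted once each. This is precisely where the lift enters: for a generic immersion with no lift — e.g.\ a sphere eversion capped off to a closed immersion $S^3\to\R^4$ — the analogous sets have boundary at the vertical-tangency (birth/death) events, and no such global ordering exists.

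Next I would count $|V_4|\bmod 2$ by restricting $h$ to the closed triple-point $1$-manifold $V_3$ and reading off the quadruple points through its critical data, in the manner of Banchoff's triple-point formula one dimension up. The triple points of $f$ form the image $T\subset\R^4$ of $V_3$, and a quadruple point is a transverse $4$-fold coincidence of the base points of four arcs of $V_3$. I expect an identity of the shape $|V_4|\equiv\chi(\,\cdot\,)\pmod2$, where the right-hand side is an Euler characteristic of a closed manifold assembled from $V_2$ and $V_3$ together with the height ordering. The point of the argument is that the embedded lift makes $h$ Morse with no vertical tangencies, so that all the boundary/tangency correction terms that would otherwise obstruct such a congruence drop out; since the resulting manifold is closed, its mod-$2$ Euler characteristic vanishes and the quadruple points come out even.

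The step I expect to be the main obstacle is exactly this last one: establishing the precise congruence that converts the global over/under datum supplied by the lift into vanishing mod $2$. The lift must be used in an essential, not merely formal, way, because the conclusion is genuinely false without it: the capped sphere eversion is a closed immersion $S^3\to\R^4$ with an odd number of quadruple points by Proposition~\ref{eversion}, and it does not lift. Hence any valid proof has to extract evenness from the height ordering itself. The difficulty is that the quadruple points sit as \emph{interior} $4$-fold crossings of $T$ rather than as endpoints — each branch of $V_3$ passes straight through — so they are not the boundary of any naive subchain of $T$, and a one-dimensional counting argument will not suffice. Making the count rigorous therefore requires either a careful Morse-theoretic accounting on $V_2$ and $V_3$ that keeps track of the height ordering, or, equivalently, exhibiting the quadruple-point $0$-cycle as a genuine boundary by passing to the $2$-dimensional double-decker surface $V_2$ and showing that the embedded lift trivializes the relevant normal invariant.
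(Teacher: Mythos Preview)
Your proposal is a plan, not a proof, and the gap is the one you yourself flag: the congruence $|V_4|\equiv\chi(\cdot)\pmod 2$ is stated as an expectation with the right-hand side left unspecified, and you call establishing it ``the main obstacle.'' That congruence is the entire content of the proposition, so as written nothing has been shown. There is also a conceptual slip about what the lift actually provides. Your $V_r$ is simply the ordered $r$-fold point set of the immersion $f$ (the condition ``$\tilde f(p_i)$ lie on a common vertical line'' is exactly $f(p_1)=\cdots=f(p_r)$), and this is a closed manifold for \emph{any} generic immersion of a closed $3$-manifold---the lift is irrelevant here. Likewise ``no vertical tangencies'' is nothing more than the hypothesis that $f=\pi\circ\tilde f$ is an immersion. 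Your own counterexample, the capped sphere eversion, therefore also has closed $V_r$'s, and every sentence you wrote prior to the unproved congruence applies to it verbatim. The only genuinely new datum supplied by the embedded lift is the global height ordering of sheets at each multiple point; a correct argument along your lines would have to extract evenness from that ordering, and your sketch offers no concrete mechanism for doing so.

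For contrast, the paper's proof is short and cobordism-theoretic rather than Morse-theoretic. Since $\tilde f$ is an embedding of $S^3$ in $\R^5$, its Smale invariant lies in $\ker\bigl(J\co\pi_3(SO(5))\to\pi_3^S\cong\Z/24\Z\bigr)$ by Hughes--Melvin; via the Pontrjagin--Thom construction with the canonical codimension-one normal framing, this says precisely that $f$ is null-cobordant in the cobordism group of oriented immersions of $3$-manifolds in $\R^4$. Freedman showed that the mod-$2$ quadruple-point count is a cobordism invariant realizing the nontrivial homomorphism $\Z/24\Z\to\Z/2\Z$, so the null-cobordant $f$ has an even number of quadruple points. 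The lift is used essentially---it is what forces the Smale invariant into $\ker J$---but the mechanism is cobordism, not a direct height-function count.
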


\begin{proof}
Let $\iota\co\R^4\to\R^5$ be the inclusion. 
Then, $\iota\circ f$ is regularly homotopic to the embedding $\tilde{f}$.
By \cite{HM}, the Smale invariant of the embedding $\tilde{f}$, 
considered as an element of $\pi_3(SO(5))$, lies 
in the kernel of the $J$-homomorphism 
$$J \co \pi_3(SO(5)) \to \pi_3^S \approx \Z/24\Z.$$ 
This implies that the cobordism class of the immersion $f$ 
is zero in the cobordism group 
of immersions of closed oriented $3$-manifolds in $\R^4$, 
which is known to be isomorphic to 
the stable $3$-stem $\pi^S_3\approx\Z/24\Z$. 
The reason is that to read off the immersion cobordism class of $f$, 
we just have to apply the Pontrjagin--Thom construction 
for $\iota\circ f(S^3)\subset\R^5$ 
and the natural normal framing 
derived from the codimension one immersion $f$ (\cite{Freedman}, see 
also \cite[p.\,180]{Hughes2}), 
and this construction exactly gives the $J$-homomorphism. 

Due to \cite{Freedman}, 
the number modulo $2$ of quadruple points of 
a generic immersion of a closed oriented $3$-manifold 
into $\R^4$ is invariant up to cobordism and 
determines the non-trivial homomorphism from 
$\pi^S_3\approx\Z/24\Z$ to $\Z/2\Z$. 
Thus, the null-cobordant immersion $f$ 
should have an even number of quadruple points. 
\end{proof}

\begin{remark}
Actually, for a generic immersion of $S^3$ into $\R^4$, 
its liftability to an embedding even in $\R^6$ implies that 
it has an even number of quadruple points 
(cf.\, \cite[Example~1]{melikhov} and \cite[Theorem~4.4]{takase}). 
\end{remark}

The following lemma is elementary. 

\begin{lemma}\label{lemma}
Let $\tilde{f}\co S^2\to\R^4$ be an embedding of 
the $2$-sphere $S^2$ 
such that 
$f:=\pi\circ\tilde{f}$ is an immersion. Then, 
there exists an embedding 
\[
\tilde{F}\co D^3\to\R^5_+=\left\{\left((x_1,x_2,x_3,x_4),t\right)\in\R^4\times[0,\infty)\right\}
\]
of the $3$-disk $D^3$ 
such that $\tilde{F}$ is an embedding extending $\tilde{f}$ and 
that $(\pi,\id)\circ\tilde{F}$ is an immersion extending $f$, where 
$(\pi,\id)$ is the projection 
\[
(\pi,\id)\co\R^5_+\to\R^4_+:\left((x_1,x_2,x_3,x_4),t\right)\mapsto\left((x_1,x_2,x_3),t\right).
\]
\end{lemma}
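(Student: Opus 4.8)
The plan is to produce $\tilde{F}$ in the ``graphical'' form dictated by the two conditions. Writing a point of $\R^5_+$ as $(y,x_4,t)$ with $y\in\R^3$, $x_4\in\R$, $t\ge 0$, set $F:=(\pi,\id)\circ\tilde{F}$ and let $H$ be the $x_4$-coordinate of $\tilde{F}$, so that $\tilde{F}=(F,H)$. The requirement that $(\pi,\id)\circ\tilde{F}$ be an immersion is then exactly that $F\co D^3\to\R^4_+$ be an immersion, and the requirement that $\tilde{F}$ be an embedding is exactly that $H$ take distinct values on the two preimages of every self-intersection point of $F$ (so that $(F,H)$ is injective, hence an embedding since the domain is compact and $F$ immersion forces $(F,H)$ immersion). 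Writing $\tilde{f}=(f,h)$ accordingly, with $h\co S^2\to\R$ the fourth coordinate of $\tilde{f}$, the task becomes: \emph{fill the immersed sphere $f$ by an immersed $3$-disk $F$ into $\R^4_+$ and fill the height data $h$ by a function $H$ separating the newly created double sheets.}

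First I would push $\tilde{f}$ off the boundary by the product collar $(\theta,t)\mapsto(\tilde{f}(\theta),t)$ on $S^2\times[0,\delta]$; its shadow $(\theta,t)\mapsto(f(\theta),t)$ is the product of the immersion $f$ with an interval, hence an immersion, and the collar itself is an embedding. This reduces everything to capping the pushed-in copy $\tilde{f}(S^2)\times\{\delta\}$. For the cap I would take the straight cone to an apex $(q,T)$,
\[
\tilde{F}_0(s,\theta)=\bigl((1-s)\,\tilde{f}(\theta)+s\,q,\ \delta+s(T-\delta)\bigr),\qquad s\in[0,1],
\]
and check by differentiation that, for every $s<1$, the map $\tilde{F}_0$ is injective and that both $\tilde{F}_0$ and its shadow are immersions: at each level $\{t=\text{const}\}$ the $\theta$-derivatives span a $2$-plane (because $\tilde{f}$, resp.\ $f$, is an immersion) while $\partial_s$ has nonzero $t$-component, so the differentials have full rank; injectivity across levels is automatic and within a level follows from that of $\tilde{f}$. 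Thus the only defect is at the apex $s=1$, where the image is a cone over a scaled copy of the embedded sphere $\tilde{f}(S^2)$.

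The crucial point is that, since $\tilde{F}_0$ is the cone on the \emph{embedding} $\tilde{f}$, the height data is automatically coherent everywhere except at this single apex, so the entire difficulty is localized in an arbitrarily small neighborhood of $(q,T)$, where a full slab of room is available. It remains to smooth the cone point, i.e.\ to replace a neighborhood of the apex --- a tiny embedded $2$-sphere, a scaled copy of $\tilde{f}(S^2)$ sitting in a thin slab --- by a smooth embedded $3$-disk whose shadow is still an immersion and whose $x_4$-coordinate agrees with the cone along the gluing sphere. Here I would use two facts: included into $\R^5$ the sphere $\tilde{f}(S^2)$ is unknotted (Zeeman), so the tiny sphere bounds an embedded $3$-ball in the interior of the slab; and its shadow, a generic immersion of $S^2$, is regularly homotopic to the round sphere by Smale's theorem ($\pi_2(SO(3))=0$), so the trace of such a regular homotopy completed by a flat ball gives a cap whose shadow is tautologically an immersion. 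The main obstacle is to make these compatible --- to produce a single cap that is embedded \emph{and} has immersed shadow \emph{and} carries an $x_4$-function matching the cone --- which amounts to propagating the over/under height data coherently across the codimension-one triple-point events of the regular homotopy. This is exactly where passing to the half-space $\R^5_+$ (rather than $\R^4\times[0,1]$) supplies the extra room needed to absorb any parity issue; as a consistency check, doubling the finished $\tilde{F}$ across $\partial\R^5_+$ yields an embedding $S^3\to\R^5$ lifting the doubled immersion $S^3\to\R^4$, which by Proposition~\ref{lift} must then carry an even number of quadruple points, as indeed it does.
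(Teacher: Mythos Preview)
Your cone reduction is circular. After the cone step you say ``the entire difficulty is localized in an arbitrarily small neighborhood of $(q,T)$'', but what sits in that neighborhood is a scaled copy of the \emph{same} embedded sphere $\tilde{f}(S^2)$ with the \emph{same} immersed shadow $f(S^2)$ and the \emph{same} boundary height data $h$ (rescaled). Capping it by an embedded $3$-disk whose shadow is an immersion and whose $x_4$-coordinate matches the prescribed $h$ on the boundary is exactly the statement of the lemma you are trying to prove; the problem is scale-invariant, so ``arbitrarily small'' buys nothing.

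You then correctly identify the real issue --- fusing the Zeeman/Kervaire embedded cap with the Smale regular-homotopy immersed cap into a single object --- and call it ``the main obstacle'', but you do not resolve it. Propagating the over/under data $H$ across the double surface of the trace $F$ is a genuine extension problem: the sign of $H(p)-H(\tau p)$ is forced on $\partial D^3$ by $h$, and you must argue it extends consistently through the triple-curve and quadruple-point strata of $F$. The sentence ``passing to the half-space $\R^5_+$ \dots\ supplies the extra room needed to absorb any parity issue'' is not an argument, and your ``consistency check'' via Proposition~\ref{lift} only verifies a necessary condition on the \emph{output}, not that the construction can be carried out.

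The paper avoids this altogether by working in the opposite order: first take any properly embedded Seifert $3$-disk $\tilde\Delta\subset\R^5_+$ for $\tilde{f}(S^2)$ (Kervaire), observe that $\partial/\partial x_4$ is normal to $\partial\tilde\Delta$ because $f$ is an immersion, extend it to a normal field $\nu$ on $\tilde\Delta$, and then apply the Rourke--Sanderson Compression Theorem to isotope $(\tilde\Delta,\nu)$ rel boundary until $\nu$ is parallel to $\partial/\partial x_4$. After compression the projection $(\pi,\id)|_{\tilde\Delta}$ is automatically an immersion. This packages the whole ``compatibility'' step you were missing into a single citation.
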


\begin{proof}
By Kervaire's result \cite{Kervaire}, 
there exists a $3$-disk $\tilde\Delta$ properly embedded in $\R^5_+$
such that $\partial\tilde\Delta=\tilde{f}(S^2)$. 
Then at each point $y\in\partial\tilde\Delta\subset\R^4$, 
the vector $\left(\partial/\partial x_4\right)_y$ is not 
tangent to $\partial\tilde\Delta$ 
since $f=\pi\circ\tilde{f}$ is an immersion. 
Therefore, we can assume that $\partial/\partial x_4$ 
determines a normal vector field on $\partial\tilde\Delta$ and 
extend it to a (homotopically unique) normal vector field 
on $\tilde\Delta$ in $\R^5_+$, which we denote by $\nu$. 


By the Compression Theorem \cite{RS}, 
we can isotope $(\tilde\Delta,\nu)$ with fixing it 
on $\partial\tilde\Delta$ so that 
$\nu$ becomes parallel to the $x_4$-direction (of $\R^5_+$). 
Thus, we now have the embedded $3$-disk $\tilde\Delta$ 
in $\R^5_+$ with boundary $\partial\tilde\Delta=\tilde{f}(S^2)\subset\partial\R^5_+=\R^4$, 
whose projection via $(\pi,\id)$ is an immersed $3$-disk in $\R^4_+$ 
with boundary $f(S^2)\subset\partial\R^4_+=\R^3$. 
Thus, we can have a desired embedding 
$\tilde{F}\co D^3\to\R^5_+$ 
such that $\tilde{F}$ is an embedding extending $\tilde{f}$ 
and $(\pi,\id)\circ\tilde{F}$ is an immersion extending $f$. 
\end{proof}

\section{Equivalent but not Regular-equivalent diagrams}\label{main}

For an oriented surface-knot diagram $D$, 
the \textit{mirrored} diagram of $D$ is defined to be $r(D)$ 
and the \textit{inverted} diagram of $D$ be the same diagram as $D$  
with the opposite orientation. 
We use the symbols $D^*$ for 
the mirrored diagram and $-D$ for 
the inverted diagram.

\begin{remark}
It is seen that the diagram $D$ equipped with the reversed height information 
represents the same surface-knot as the diagram $r(D)$. 
Note, however, that we always use the term \textit{mirrored diagram} 
and the symbol $D^*$ exactly for $r(D)$. 
\end{remark}

\begin{proposition}\label{prop:inv}
Let $D$ be a regular diagram of an $S^2$-knot. 
\begin{enumerate}
\item[(i)] The diagram $D$ and its mirrored diagram 
$D^*$ are not regular-equivalent. 
\item[(ii)] The diagram $D$ and its inverted diagram $-D$ are not regular-equivalent. 
\end{enumerate}
\end{proposition}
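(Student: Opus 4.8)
The plan is to detect the difference via Nowik's quadruple-point invariant of the underlying projection immersions. To each regular diagram $D$ of an $S^2$-knot, presented as $D=\pi\circ\tilde{f}(S^2)$ for an embedding $\tilde{f}\co S^2\to\R^4$, I associate the generic immersion $f:=\pi\circ\tilde{f}\co S^2\to\R^3$; regularity of $D$ means exactly that $f$ is an immersion. Since all immersions of $S^2$ in $\R^3$ are regularly homotopic, $Q(f,f')$ is defined for any two such projections. The mirrored diagram $D^*=r(D)$ has projection immersion $r\circ f$, while the inverted diagram $-D$ has projection immersion $f\circ c$ for an orientation-reversing diffeomorphism $c\co S^2\to S^2$ (the image being unchanged). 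The whole proof reduces to two facts: (A) regular-equivalence forces the $Q$-invariant of the two projections to vanish, and (B) $Q(f,r\circ f)\equiv Q(f,f\circ c)\equiv1\pmod2$.

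For (B) I would use the cocycle and naturality properties of $Q$ (concatenation of regular homotopies, and invariance under post-composition with the reflection $r$) together with $Q(j,j^*)=1$ from Proposition~\ref{eversion}. Inserting the intermediate immersions $j$ and $j^*$ gives
\[
Q(f,r\circ f)=Q(f,j)+Q(j,j^*)+Q(j^*,r\circ f)\equiv Q(j,j^*)=1\pmod2,
\]
where the first and third terms cancel modulo $2$ because naturality and symmetry yield $Q(j^*,r\circ f)=Q(r\circ j,r\circ f)=Q(j,f)=Q(f,j)$. For the inverted diagram, $Q(f,f\circ c)=1$ is immediate from the $S^2$-case of Theorem~\ref{nowik2}, as $c$ is orientation-reversing.

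For (A), the key geometric step is that a regular-equivalence from $D_0$ to $D_1$ --- a finite sequence of the branch-free moves $D1,D2,T1,T2$ and ambient isotopies of $\R^3$ --- is realized by an isotopy $\{\tilde{f}_t\}$ of $\R^4$ whose projections $f_t:=\pi\circ\tilde{f}_t$ remain immersions for all $t$; indeed the branch-free condition is precisely the assertion that no $f_t$ acquires a singular point, so $\{f_t\}$ is a genuine regular homotopy from $f_0$ to $f_1$ lifting to the isotopy $\{\tilde{f}_t\}$. The trace $F(x,t)=(f_t(x),t)\co S^2\times[0,1]\to\R^4$ is then a generic immersion carrying $Q(f_0,f_1)$ quadruple points, and it lifts to the embedding $\tilde{F}(x,t)=(\tilde{f}_t(x),t)$ into $\R^5$. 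Capping the two ends by the immersed $3$-disks supplied by Lemma~\ref{lemma} (which lift to embedded $3$-disks in $\R^5_+$) produces a closed immersion $\Phi\co S^3\to\R^4$ that lifts to an embedding in $\R^5$; by Proposition~\ref{lift} it has an even number of quadruple points. As the product collars at the seams contribute none, this yields
\[
Q(f_0,f_1)\equiv q(B_0)+q(B_1)\pmod2,
\]
where $q(B_i)$ is the number of quadruple points of the chosen filling $B_i$ of $f_i$. For the mirror I choose $B_1=r(B_0)$, and for the inverse I choose $B_1=B_0$ (legitimate since $f_0$ and $f_1$ have the same image in each case), so that $q(B_0)+q(B_1)\equiv0$ and hence $Q(f_0,f_1)=0$, contradicting (B). This proves both (i) and (ii).

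The main obstacle I anticipate is the careful justification of the correspondence in (A): that a branch-free sequence of Roseman moves can be promoted to an ambient isotopy of $\R^4$ keeping every projection an immersion, so that the resulting family is a bona fide regular homotopy, and that the capping and gluing can be carried out inside the appropriate half-spaces without creating spurious quadruple points at the seams and with the orientation conventions needed to invoke Proposition~\ref{lift}. Once this is in place, the $Q$-computations and the reflection/identity choices of fillings are routine.
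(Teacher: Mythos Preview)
Your approach is essentially the paper's: both derive a contradiction by showing that the quadruple-point count $Q$ of the projected regular homotopy is simultaneously odd (via the eversion/Theorem~\ref{nowik2}) and even (by capping the track with a Lemma~\ref{lemma} filling and its reflected or identical copy, then applying Proposition~\ref{lift}). One detail the paper handles explicitly that you gloss over: the terminal projection $f_1$ agrees with $r\circ f_0$ (resp.\ $f_0\circ c$) only up to an orientation-preserving reparametrization $\psi$ of $S^2$, so the paper invokes Theorem~\ref{nowik2} once more to absorb the extra term $Q\bigl(r\circ f_0,(r\circ f_0)\circ\psi\bigr)=0$; you should do likewise, and the analogous issue arises when matching the boundary of the reflected cap $r(B_0)$ to $\tilde f_1(S^2)$ rather than $r(\tilde f_0(S^2))$.
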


\begin{proof}
First, we prove the case (i). 
Assume that $D$ and $D^*(=r(D))$ are regular-equivalent, 
that is, 
there is a sequence of Roseman moves without branch points. 
Clearly such a sequence of Roseman moves 
gives the regular homotopy covered by 
its corresponding isotopy. Namely there exists 
an isotopy 
\[
\tilde{h}_t\co S^2\to\R^4,\ t\in[-1,1], 
\]
whose projection gives a regular homotopy 
\[
h_t (:=\pi\circ\tilde{h}_t)\co S^2\to\R^3,\ t\in[-1,1]
\]
such that $h_1(S^2)=\pi\circ\tilde{h}_1(S^2)=D$ 
and $h_{-1}(S^2)=\pi\circ\tilde{h}_{-1}(S^2)=D^*$. 

Then, we will show below that the above assumption 
leads to the following two contradictory claims regarding 
the number of quadruple points 
occurring in $h_t$: 
\begin{itemize}
\item[(a)]
$Q(h_{-1},h_1)=1\pmod2$, and 
\item[(b)]
$Q(h_{-1},h_1)=0\pmod2$. 
\end{itemize}
Roughly speaking, 
(a) will be deduced from the fact that 
$h_t$ contains an essential part of an eversion, 
and (b) from the fact that $h_t$ lifts to the isotopy $\tilde{h}_t$. 

(a)
We construct a regular homotopy $\{\phi_t\}_{t\in[-3,2]}$, 
which is an eversion, 
between $\phi_2=i$ and $\phi_{-3}=r\circ i$ as follows (see Figure~\ref{fig:homotopy}). 
For $t\in[-1,1]$, we put $\phi_t=h_t$. 
Since all immersions of $S^2$ into $\R^3$ are regularly homotopic, 
there exists a regular homotopy between 
the immersion $h_1$ and the standard inclusion $i$; 
let $\{\phi_t\}_{t\in[1,2]}$ be such a regular homotopy 
that $\phi_1=h_1$ and $\phi_2=i$. 
Now, since the images of $h_{-1}$ and $r \circ h_{1}$ coincide 
(as oriented immersed surfaces), 
there exists an 
orientation-preserving diffeomorphism $\varphi$ of $S^2$ 
such that $r\circ h_{1}=h_{-1}\circ\varphi$. 
Then, take a regular homotopy $\{\phi_t\}_{t\in[-2,-1]}$ 
such that 
$h_{-1}\circ\varphi(=\phi_{-2})$ and $h_{-1}(=\phi_{-1})$. 
Finally, for $t\in[-3,-2]$ we put $\phi_t=r\circ\phi_{-1-t}$, 
so that 
$\phi_{-3}(=r\circ\phi_2)=r\circ i$ and $\phi_{-2}(=r\circ\phi_1)=r\circ h_1$. 
Note that the number of quadruple points 
appearing in $\{\phi_t\}_{t\in[1,2]}$ and 
that appearing in $\{\phi_t\}_{t\in[-2,-3]}$ are the same.

\begin{figure}[thbp]
\begin{center}
\setlength\unitlength{.8\textwidth}
\begin{picture}(1,0.33)%
\put(0,0.07){\includegraphics[width=\unitlength]{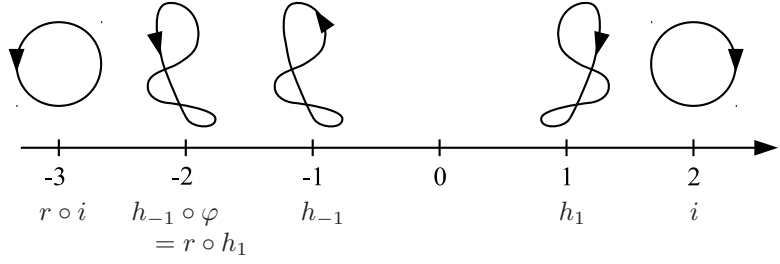}}
\put(0.04,0.04){\mbox{$r\circ i$}}%
\put(0.16,0.04){\mbox{$h_{-1}\circ\varphi$}}%
\put(0.19,0){\mbox{$=r\circ h_1$}}%
\put(0.38,0.04){\mbox{$h_{-1}$}}%
\put(0.715,0.04){\mbox{$h_{1}$}}%
\put(0.885,0.04){\mbox{$i$}}%
\end{picture}%
\end{center}
\caption{Regular homotopy $\{ \phi_t \}_{t\in[-3,2]}$}\label{fig:homotopy}
\end{figure}

Now it is clear that 
\begin{align*}
Q(\phi_{-3},\phi_{2})
&=Q(\phi_{-3},\phi_{-2})+Q(\phi_{-2},\phi_{-1})+Q(\phi_{-1},\phi_1)+
Q(\phi_1,\phi_2) \pmod2,\\
&=Q(\phi_{-2},\phi_{-1})+Q(h_{-1},h_1)+2Q(\phi_1,\phi_2)\pmod2. 
\end{align*}
Since we see that 
$Q(\phi_{-3},\phi_{2})=1\pmod2$ by Proposition~\ref{eversion} 
and $Q(\phi_{-2},\phi_{-1})=0\pmod2$
by Theorem~\ref{nowik2},
we have
\[
Q(h_{-1},h_1)=1\pmod2.
\]

(b)
By Lemma~\ref{lemma}, we can take an embedding 
\[
\tilde{F}\co D^3\to\R^5_{\ge1}=\R^4\times[1,\infty)
\]
such that $\tilde{F}$ is an embedding extending $\tilde{h}_1$ and 
that whose projection $F:=(\pi,\id)\circ\tilde{F}$ is an immersion 
extending $h_1$. 
Then, $(r,-\id)\circ\tilde{F}$, 
where 
\[\begin{array}{cccc}
(r,-\id)\co&\R^n\times\R&\longrightarrow&\R^n\times\R,\\
&(x_1,x_2,\ldots,x_n,t)&\longmapsto&(-x_1,x_2,\ldots,x_n,-t), 
\end{array}\]
gives an embedding extending $\tilde{h}_{-1}$ 
in $\R^5_{\le-1}$, whose projection $(r,-\id)\circ F$ 
is an immersion extending $h_{-1}$. 
Note that immersions $F$ and $(r,-\id)\circ F$ have 
the same number of quadruple points. 

Hence, by using $F$, $(r,-\id)\circ F$ and $\{h_t\}_{t\in[-1,1]}$, 
we can construct an immersion of the $3$-sphere 
$S^3=(-D^3)\cup (S^2\times[-1,1]) \cup D^3$ into $\R^4=\R^3\times\R$, 
which lifts to an embedding into $\R^5=\R^4\times\R$. 
Now, the number of quadruple points of this immersion, 
which is even by Proposition~\ref{lift}, 
is equal to the sum of the number of quadruple points in $\{h_t\}_{t\in[-1,1]}$ 
and twice that of $F$. 
Therefore, we have 
\[
Q(h_{-1},h_1)=0\pmod2. 
\]

This completes the proof by contradiction.

\smallskip
For the case (ii), the proof is almost the same. 
Take $h_t$ and $\tilde{h}_t$ similarly as above. 
Then, (a) follows directly from Theorem~\ref{nowik2}, 
since $h_{-1}=h_1 \circ \gamma$ for 
some orientation reversing diffeomorphism $\gamma$ of $S^2$. 
In order to show (b), 
we have only to consider 
$(\id,-\id)\circ F$ instead of $(r,-\id)\circ F$ 
in the above argument. 
\end{proof}

Let $F$ be an oriented surface-knot. 
We say that $F$ is \textit{$(+)$-amphicheiral} if 
a diagram of $F$ is equivalent to its mirrored diagram 
and that $F$ is \textit{invertible} if a diagram of $F$ is 
equivalent to its inverted diagram. 
Proposition~\ref{prop:inv} implies the following: 

\begin{theorem}\label{thm:inv}
Let $D$ be a regular diagram of an $S^2$-knot $K$. 
\begin{enumerate}
\item[(i)]
If $K$ is a $(+)$-amphicheiral knot, then the two diagrams $D$ and $D^*$ of $K$ 
are equivalent but not regular-equivalent. 
\item[(ii)] 
If $K$ is an invertible knot, then two diagrams $D$ and $-D$ of $K$ 
are equivalent but not regular-equivalent. 
\end{enumerate}
\end{theorem}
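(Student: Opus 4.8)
The plan is to deduce Theorem~\ref{thm:inv} directly from Proposition~\ref{prop:inv}, using Roseman's theorem \cite{R} that two surface-knots are equivalent if and only if they have equivalent diagrams. The essential point is to translate the diagram-level notions of $(+)$-amphicheirality and invertibility into statements about the underlying surface-knot, and then to observe that the specific pair of diagrams appearing in the statement is thereby forced to be equivalent.

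First I would record the translation between diagrams and surface-knots. The mirrored diagram $D^*=r(D)$ of a diagram $D$ of $K$ is a diagram of the mirror-image surface-knot $r(K)$, and the inverted diagram $-D$ is a diagram of $K$ equipped with the reversed orientation. Hence ``being equivalent to the mirrored (resp. inverted) diagram'', which is the definition of $(+)$-amphicheiral (resp. invertible), is by Roseman's theorem the same as asking that the surface-knot $K$ be equivalent to $r(K)$ (resp. to its orientation-reversal). In particular this is a property of $K$ alone and does not depend on the chosen diagram.

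For part (i) I would argue as follows. Since $K$ is $(+)$-amphicheiral, $K$ and $r(K)$ are equivalent surface-knots. The given regular diagram $D$ represents $K$, while $D^*$ represents $r(K)$; as these surface-knots are equivalent, Roseman's theorem yields that $D$ and $D^*$ are equivalent diagrams. On the other hand, Proposition~\ref{prop:inv}(i) asserts that $D$ and $D^*$ are not regular-equivalent, and combining the two statements gives exactly the claim. For part (ii) the argument is identical with the mirror replaced by orientation-reversal: invertibility of $K$ makes $D$ and $-D$ diagrams of equivalent surface-knots, hence equivalent by Roseman's theorem, while Proposition~\ref{prop:inv}(ii) guarantees they are not regular-equivalent.

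I do not expect any genuine obstacle here, since the substantive work has already been carried out in Proposition~\ref{prop:inv}. The only step needing a little care is the passage from ``some diagram of $K$ is equivalent to its mirror (resp. inverse)'', as the definitions are phrased, to the assertion that our \emph{particular} diagram $D$ is equivalent to $D^*$ (resp. to $-D$); this passage runs through the knot-level equivalence and the ``if and only if'' in Roseman's theorem, and I would state it cleanly so that the conclusion applies precisely to the pair of diagrams occurring in Proposition~\ref{prop:inv}.
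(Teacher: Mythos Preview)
Your proposal is correct and matches the paper's approach exactly: the paper simply states that Proposition~\ref{prop:inv} implies Theorem~\ref{thm:inv}, leaving the unpacking of the definitions of $(+)$-amphicheiral and invertible (and the appeal to Roseman's theorem to pass to the specific diagram $D$) as immediate. If anything, you have been more explicit than the paper about the passage from ``some diagram'' to ``the given diagram $D$'', but the argument is the same.
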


We have the following direct corollary. 

\begin{corollary}\label{cor:eversion}
The trivial diagram of the trivial $S^2$-knot 
is not regular-equivalent to itself with the reversed orientation. 
In other words, we cannot lift a sphere eversion to any isotopy in $\R^4$. 
\end{corollary}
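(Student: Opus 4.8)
The plan is to read the Corollary as two assertions and to derive each from what has already been established, the heart being a faithful dictionary between regular-equivalence of the trivial diagram and liftable sphere eversions.

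For the first sentence I would simply invoke Proposition~\ref{prop:inv}(ii) (or Theorem~\ref{thm:inv}(ii)). The trivial diagram $D_0$ of the trivial $S^2$-knot is a regular diagram of an $S^2$-knot --- it is branch-free, indeed double-point-free --- so Proposition~\ref{prop:inv}(ii) applies verbatim and yields that $D_0$ and its inverted diagram $-D_0$ are not regular-equivalent. If one prefers to pass through Theorem~\ref{thm:inv}(ii), one first checks that the trivial knot is invertible: the rotation by $\pi$ of $\R^4$ in the $(x_1,x_4)$-plane is an ambient isotopy carrying the standard sphere to itself while reversing its induced orientation, so $D_0$ and $-D_0$ are equivalent. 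Either way the first sentence follows immediately.

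The second sentence I would treat as a restatement and prove by contraposition. First I would fix the dictionary between immersions of $S^2$ in $\R^3$ and diagrams: the trivial diagram $D_0$ is the image of the standard inclusion $j$, while the mirrored inclusion $j^*=r\circ j$ has the same round image (the standard sphere is $r$-symmetric) but, since $r$ reverses orientation on $\R^3$, carries the reversed induced orientation; thus as an oriented diagram $j^*(S^2)$ is exactly $-D_0$ up to an ambient isotopy of $\R^3$. Consequently a sphere eversion, being a regular homotopy from $j$ to $j^*$, is precisely a regular homotopy carrying $D_0$ to $-D_0$. Now suppose, for contradiction, that some eversion $\{g_t\}$ lifts to an isotopy $\{\tilde{g}_t\}$ of embeddings $S^2\to\R^4$ with $\pi\circ\tilde{g}_t=g_t$. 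After a generic perturbation of $\{\tilde{g}_t\}$ relative to its endpoints, its projection is generic throughout; because each $\tilde{g}_t$ is an embedding whose projection is the immersion $g_t$, no branch point can ever appear (a branch point would force $\pi\circ\tilde{g}_t$ to fail to be an immersion), and the codimension-one events of the projected family are exactly the branch-free Roseman moves $D1,D2,T1,T2$. Hence $\{\tilde{g}_t\}$ descends to a branch-free sequence of Roseman moves connecting $D_0$ and $-D_0$, i.e.\ a regular-equivalence, contradicting the first sentence. This is precisely the reverse of the construction used in the proof of Proposition~\ref{prop:inv}, where a regular-equivalence was turned into a liftable eversion.

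The step I expect to be the main obstacle is this dictionary: verifying cleanly that (i) $j^*(S^2)$ really is the inverted trivial diagram $-D_0$ (the orientation bookkeeping forced by $r$ being orientation-reversing), and (ii) that a lift of a regular homotopy to an isotopy in $\R^4$ corresponds, after genericity, exactly to a branch-free Roseman sequence --- with no branch points ever created and with the two projected endpoints genuinely equal, as oriented diagrams, to $D_0$ and $-D_0$. Everything else is routine bookkeeping layered on top of the already-proved Proposition~\ref{prop:inv}.
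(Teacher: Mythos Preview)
Your proposal is correct and matches the paper's intent: the corollary is stated there without proof as a ``direct corollary'' of Theorem~\ref{thm:inv}, and you derive the first sentence exactly this way, then carefully unpack the dictionary (eversion $\leftrightarrow$ liftable regular homotopy from $D_0$ to $-D_0$) that the paper leaves implicit in the phrase ``In other words.'' Your identification of the orientation bookkeeping and the immersion/branch-free correspondence as the only points needing care is on target, and both are handled correctly.
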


It is natural to try to generalize Theorem~\ref{thm:inv} 
for other $S^2$-knot diagrams. 
In fact, we can prove in a similar way 
Theorem~\ref{thm:main} below, 
which enables us to construct, 
from any given $S^2$-knot diagram $D$, a new diagram 
equivalent but not regular-equivalent to $D$. 

We first need the following definition. 

\begin{definition}
\label{dfn:everted}
Let $D_0$ be a regular diagram of the trivial $S^2$-knot 
which is obtained by spinning the tangle diagram 
\begin{minipage}{12pt}
  \includegraphics[width=12pt]{r1.eps}
\end{minipage}, 
and $y_0\in D_0$ be the regular point of $D_0$ as in Figure~\ref{fig:everted}. 
For a surface-knot diagram $D$, choose a regular point $y$ of $D$ and 
take an oriented connected sum of $D$ and $D_0$ by introducing 
an untwisted tube connecting (disks around) $y$ and $y_0$ 
as in Figure~\ref{fig:everted}. 
Then the resulting diagram is called an \textit{everted} diagram of $D$ 
and denoted by $\ev{D}$. 
Although $\ev{D}$ depends on the choices of $y$ and the connecting tube, 
but it is well-defined up to regular-equivalence. 
\end{definition}

\begin{figure}[thbp]
\includegraphics[width=0.70\textwidth]{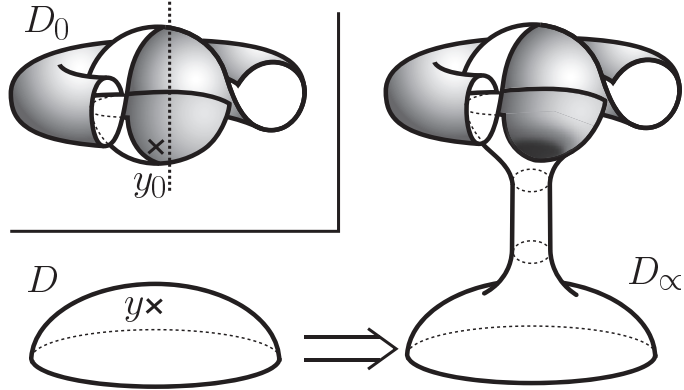}
\caption{An everted diagram of $D$}\label{fig:everted}
\end{figure}

\begin{theorem}\label{thm:main}
Let $D$ be a regular diagram of an $S^2$-knot $K$. 
Then, $D$ and its everted diagram $\ev{D}$ 
are equivalent but not regular-equivalent. 
\end{theorem}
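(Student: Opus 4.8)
The plan is to run the two-sided quadruple-point argument of Proposition~\ref{prop:inv} with the reflection there replaced by the triviality of the everting summand. The equivalence of $D$ and $\ev{D}$ is immediate from Definition~\ref{dfn:everted}: since $\ev{D}=D\conn D_0$ and $D_0$ is a diagram of the trivial $S^2$-knot, $\ev{D}$ represents $K\conn(\text{trivial})=K$. So I concentrate on the failure of regular-equivalence.

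Suppose, for contradiction, that $D$ and $\ev{D}$ are regular-equivalent. As in Proposition~\ref{prop:inv}, a branch-free sequence of Roseman moves is covered by an isotopy $\tilde{h}_t\co S^2\to\R^4$, $t\in[-1,1]$, whose projection $h_t:=\pi\circ\tilde{h}_t$ is a regular homotopy with $h_1=f$ (the projection of $D$) and $h_{-1}=f\conn\ev{j}$ (the projection of $\ev{D}$). I would then establish the two contradictory congruences
\[
Q(h_{-1},h_1)=1\quad\text{and}\quad Q(h_{-1},h_1)=0\pmod2.
\]
The first is precisely Lemma~\ref{nowik}: because $h_{-1}=f\conn\ev{j}=f\conn j^*$ one has $Q(f,f\conn j^*)=1$, and this is the point at which the everting forces the odd eversion contribution. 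For the second I use liftability exactly as in the proof of Proposition~\ref{prop:inv}: capping $\tilde{h}_1$ by an embedded $3$-disk $\tilde{F}$ in $\R^5_{\ge1}$ via Lemma~\ref{lemma}, using the isotopy trace $(x,t)\mapsto(\tilde{h}_t(x),t)$ as the middle cylinder, and capping $\tilde{h}_{-1}$ at the bottom, I build an immersion of $S^3=(-D^3)\cup(S^2\times[-1,1])\cup D^3$ into $\R^4$ that lifts to an embedding into $\R^5$; Proposition~\ref{lift} then makes its quadruple points even, and since the cylinder contributes exactly $Q(h_{-1},h_1)$ it suffices to give the bottom cap the same quadruple parity as $\tilde{F}$.

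Producing that bottom cap is the step I expect to be the main obstacle, and it is here that the triviality of $D_0$ must be used in an essential way. The summand $\tilde{d}_0\co S^2\hookrightarrow\R^4$ realizing $D_0$ is unknotted, so it bounds an embedded $3$-ball $B^3\subset\R^4$. I would cap $\tilde{h}_{-1}=\tilde{h}_1\conn\tilde{d}_0$ by the boundary connected sum $\tilde{F}\bconn\widetilde{B}$, where $\widetilde{B}$ is a push-off of $B^3$ into the lower half-space and, as in Remark~\ref{rmk:conn}, the connecting tube is kept off the triple-point loci, so that the quadruple points of this cap are those of $\tilde{F}$ together with those of $\widetilde{B}$. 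It then remains to prove that $\widetilde{B}$ carries an even number of quadruple points. Here I would exploit that $B^3$ is genuinely embedded in $\R^4$: choosing the push-off height generically, the self-intersections of the projected $\widetilde{B}$ lie over points whose vertical $x_4$-line meets $\tilde{d}_0$ multiply, hence over the double curves and the isolated triple points of $\ev{j}$; since an immersed $2$-sphere in $\R^3$ has no quadruple points, the quadruple points of $\widetilde{B}$ are concentrated over the triple points of $\ev{j}$, and a local model there — using that the three local sheets of the embedded $B^3$ can be separated in the $x_4$-direction — should show they cancel in pairs modulo $2$. This forces the two caps to agree, yields $Q(h_{-1},h_1)=0$, and contradicts Lemma~\ref{nowik}, completing the proof by contradiction.
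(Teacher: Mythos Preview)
Your overall strategy matches the paper's, and part~(a) is essentially right, modulo one slip: from $h_{-1}(S^2)=\ev{D}=(h_1\conn j^*)(S^2)$ you get only $h_{-1}=(h_1\conn j^*)\circ\varphi$ for some orientation-preserving diffeomorphism~$\varphi$, not equality of maps; one then needs Theorem~\ref{nowik2} to pass from $Q(h_1,h_1\conn j^*)=1$ (Lemma~\ref{nowik}) to $Q(h_{-1},h_1)=1$. The paper makes exactly this step.

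The genuine gap is your bottom cap in~(b). First, pushing an embedded Seifert ball $B^3\subset\R^4$ for $\tilde{d}_0$ into $\R^5_{\le-1}$ gives no control over $(\pi,\id)|_{\widetilde{B}}$: nothing prevents $\partial/\partial x_4$ from being tangent to $B^3$ in its interior, so the projection need not be an immersion at all---this is precisely the issue Lemma~\ref{lemma} addresses via the Compression Theorem, and you do not invoke it here. Second, even granting immersivity, your quadruple-point count is wrong in principle: the quadruple points of an immersed $3$-ball in $\R^4$ are an interior phenomenon governed by the whole ball, not by the double/triple locus of its boundary $2$-sphere, so the claim that they are ``concentrated over the triple points of $\ev{j}$'' and cancel by some unspecified local model is unsupported. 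The paper sidesteps both problems with a different bottom-cap construction: it inserts an embedded cobordism $C$ (a $2$-handle along the meridian of the connecting tube) that separates $\ev{D}$ into $D\amalg D_0$ with immersed, quadruple-point-free projection; caps the $D$-piece by the reflection $(\id,-\id)\circ\tilde{F}$ of the top cap; and caps the $D_0$-piece by first applying a single Roseman move of type~$D1$ (no branch or quadruple points) to reach the trivial diagram and then a standard embedded $3$-disk~$\tilde{G}$. This makes the $D_0$ contribution \emph{manifestly} zero, so the bottom and top caps carry the same quadruple count and Proposition~\ref{lift} yields $Q\equiv0$.
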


\begin{proof}
As in the proof of Theorem~\ref{thm:inv}, 
assume that $D$ and $\ev{D}$ are regular-equivalent, 
that is, assume the existence of 
an isotopy 
\[
\tilde{h}_t\co S^2\to\R^4,\ t\in[0,1], 
\]
whose projection gives a regular homotopy 
\[
h_t\co S^2\to\R^3,\ t\in[0,1]
\]
such that $h_{0}(S^2)=\pi\circ\tilde{h}_{0}(S^2)=\ev{D}$ 
and $h_1(S^2)=\pi\circ\tilde{h}_1(S^2)=D$. 
From this assumption we will deduce 
the following two contradictory claims: 
\begin{itemize}
\item[(a)]
$Q(h_{0},h_1)=1\pmod2$, and 
\item[(b)]
$Q(h_{0},h_1)=0\pmod2$. 
\end{itemize}

(a) 
Put $j^*=r\circ j\co S^2\hookrightarrow\R^3$. 
Then, we can take the oriented 
connected sum $h_{1}\conn j^*$ 
of $h_{1}$ and $j^*$ (as in Remark~\ref{rmk:conn}), 
so that $(h_{1}\conn j^*)(S^2)=\ev{D}$. 
Hence, there exists an 
orientation-preserving diffeomorphism $\varphi$ of $S^2$ 
such that 
\[
h_{0}=(h_{1}\conn j^*)\circ\varphi. 
\]
Moreover, by Theorem~\ref{nowik2}, 
for any regular homotopy $\left\{h_t\co S^2\to\R^3\right\}_{t\in[-1,0]}$ 
such that $h_{-1}=h_{1}\conn j^*$ and 
$h_{0}=(h_{1}\conn j^*)\circ\varphi$, 
we have 
\[
Q(h_{-1},h_0)=0\pmod2. 
\]
Since the whole regular homotopy $\{h_t\}_{t\in[-1,0]\cup[0,1]}$ 
between $h_{-1}=h_{1}\conn j^*$ and $h_1$ has odd 
number of quadruple points by Lemma~\ref{nowik}, 
we have 
\[
Q(h_{0},h_1)=1\pmod2. 
\]

(b) 
As seen above, 
we can consider $\ev{D}\subset\R^3\times\{0\}$ as 
an \textit{oriented} connected sum of $D=h_1(S^2)$ and $D_0$.  
Hence, there exists a cobordism $C$ between 
$\ev{D}$ and the disjoint union $D \amalg D_0$, 
which is realized by attaching a $2$-handle 
along the ``meridian'' 
of the tube used in taking the connected sum. 
%
%
It is easy to see that the cobordism $C$ can be embedded 
in $\R^4\times[-1,0]$ whose projected image onto $\R^3\times[-1,0]$ 
is immersed and contains no quadruple points; 
assume that $C\subset\R^4\times[-1,0]$ such that 
$(\pi,\id)(C\cap(\R^4\times\{-1\}))=D \amalg D_0$ and 
$(\pi,\id)(C\cap(\R^4\times\{0\}))=\ev{D}$ 
(see Figure~\ref{fig:S3}).

We can extend the embedding $\tilde{h}_1$ to an embedding 
$\tilde{F}\co D^3\to\R^5_{\ge1}$ as in Lemma~\ref{lemma}. 
It is clear that we can deform the diagram $D_0$ 
to the trivial diagram just by a single Roseman move of type $D1$ 
(see Figure~\ref{fig:roseman}), 
that involves neither branch points nor quadruple points 
(in the projection onto $\R^4$). 
By capping off the trivial diagram with the embedded $3$-disk, 
we obtain an embedding $\tilde{G}\co D^3\to\R^5_{\le -1}$ 
such that 
$(\pi,\id)(\tilde{G}(D^3) \cap (\R^4\times\{-1\}))=D_0$ 
and that the projected image of $\tilde{G}(D^3)$ involves no quadruple points. 
Capping off the embedded cobordism $C$ with 
the embeddings $(\id,-\id)\circ\tilde{F}$ and $\tilde{G}$, 
we obtain an embedded $3$-disk $C'$ in $\R^5_{-}$ 
such that $(\pi,\id)(C' \cap(\R^4\times\{0\}))=\ev{D}$ (see Figure~\ref{fig:S3}). 
Note that the number of quadruple points of the projected image of $C'$ 
is equal to that of $\tilde{F}(D^3)$. 

Hence, by using the track of the isotopy $\{h_t\}_{t\in[0,1]}$, 
the embedding $\tilde{F}$, and the embedded $3$-disk $C'$, 
we can construct an immersion of $S^3$ into $\R^4$ 
covered by an embedding into $\R^5$ (see Figure~\ref{fig:S3}). 
\begin{figure}[thbp]
\includegraphics[width=0.83\textwidth]{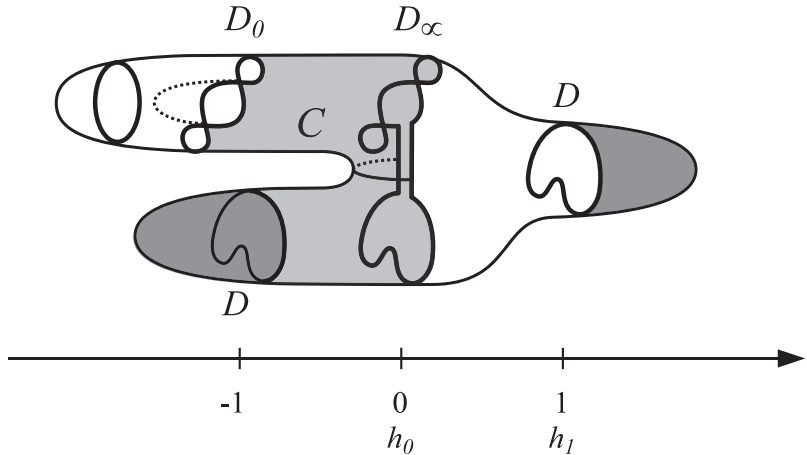}
\caption{The immersed $S^3$ in $\R^4$}\label{fig:S3}
\end{figure}

Now, the number of quadruple points of this immersion, 
which ought to be even by Proposition~\ref{lift}, 
has the same parity as 
the sum of $Q(h_0,h_1)$ and 
twice the number of quadruple points of the projected image of $\tilde{F}(D^3)$. 
Therefore we have
\[
Q(h_{0},h_1)=0\pmod2. 
\]

This completes the proof by contradiction. 
\end{proof}

\begin{remark}
For an oriented diagram $D$ of a $S^2$-knot \textit{with} branch points, 
an arbitrarily chosen regular diagram $D'$ (of the same knot) 
is not regular-equivalent to $D$, 
since it is obvious that any sequence of Roseman moves from $D'$ to $D$ 
should contain a ``branch-birth'' move B1 or B2 shown in Figure~\ref{fig:roseman}. 
\end{remark}


\end{document}